\documentclass[a4paper]{amsart}

%  -------------------  Math symbols ----------------------- 

\usepackage{amsmath}
\usepackage{amssymb}
\usepackage{bm} %boldmath
\usepackage{mathrsfs} %mathscr
\usepackage{tensor} %indices of tensors
\usepackage{braket} %large set (\Set{ \, | \, })
\usepackage[italicdiff]{physics} %(partial) derivative
%\usepackage{interval}

%\intervalconfig{left open fence=(, right open fence=)}

\DeclareMathOperator{\supp}{supp}

\newcommand{\clop}[2]{[#1, #2)}
\newcommand{\opcl}[2]{(#1, #2]}

%  -------------------  Cleveref  ----------------------- 

\usepackage{amsthm} %before cleveref

\usepackage{cleveref} 

\usepackage{autonum} %numbering only equations referred

\theoremstyle{plain}
\newtheorem{theorem}{Theorem}[section]
\newtheorem{lemma}[theorem]{Lemma}
\newtheorem{proposition}[theorem]{Proposition}
\newtheorem{corollary}[theorem]{Corollary}

\theoremstyle{definition}

\theoremstyle{remark}
\newtheorem{remark}[theorem]{Remark}

\crefname{theorem}{Theorem}{Theorems}
\crefname{lemma}{Lemma}{Lemmas}
\crefname{proposition}{Proposition}{Propositions}
\crefname{corollary}{Corollary}{Corollaries}
\crefname{claim}{Claim}{Claims}
\crefname{assumption}{Assumption}{Assumptions}
\crefname{definition}{Definition}{Definitions}
\crefname{remark}{Remark}{Remarks}
\crefname{example}{Example}{Examples}
\crefname{equation}{}{}
\crefname{chapter}{Chapter}{Chapters}
\crefname{section}{Section}{Sections}
\crefname{subsection}{Section}{Sections}
\crefname{appendix}{Appendix}{Appendices}

\crefname{condition}{}{}

\creflabelformat{equation}{(#2#1#3)}
\creflabelformat{condition}{(#2#1#3)}

 %Eq.~1 to~ 3
 %Eq.~1 and~3
 
 %Thm~1, Prop~2, and Cor~3

%\let\normalref\ref
%\renewcommand{\ref}{\cref}

%  -------------------  Eqation numbering ----------------------- 

\numberwithin{equation}{section}

\usepackage{comment}
\usepackage{tikz-cd}

\usepackage[non-sorted-cites,initials,alphabetic,nobysame]{amsrefs}

\AtBeginDocument{%
	\def\MR#1{}
}

\title{Stability of the existence of a pseudo-Einstein contact form}
\author{Yuya Takeuchi}
\address{Department of Mathematics \\ Graduate School of Science \\ Osaka University
	\\ 1-1 Machikaneyama-cho, Toyonaka, Osaka 560-0043, Japan}
\email{yu-takeuchi@cr.math.sci.osaka-u.ac.jp}

\subjclass[2010]{Primary~32V05, Secondary~32E10, 32V15}

\keywords{pseudo-Einstein contact form; Lee conjecture; Bott-Chern class}

\begin{document}

\begin{abstract}
	A pseudo-Einstein contact form plays a crucial role
	in defining some global invariants of closed strictly pseudoconvex CR manifolds.
	In this paper,
	we prove that
	the existence of a pseudo-Einstein contact form is preserved
	under deformations as a real hypersurface in a fixed complex manifold
	of complex dimension at least three.
\end{abstract}

\maketitle

\section{Introduction} \label{section:introduction}

A pseudo-Einstein contact form,
which was first introduced by Lee~\cite{Lee88},
is necessary for defining some global CR invariants:
the total $Q$-prime curvature~\cites{Case-Yang13,Hirachi14}
and the boundary term of the renormalized Gauss-Bonnet-Chern formula~\cite{Marugame16}.
When we consider the variation of such an invariant,
the question arises
whether the existence of a pseudo-Einstein contact form
is preserved under deformations of a CR structure.
In this paper,
we will show this stability for deformations as a real hypersurface
in a fixed complex manifold of complex dimension at least three.
More precisely,
we will prove the following

\begin{theorem} \label{thm:existence-of-flat-Hermitian-metric}
	Let $\Omega$ be a relatively compact strictly pseudoconvex domain
	in a complex manifold $X$ of complex dimension at least three.
	Assume that its boundary $M = \partial \Omega$ admits a pseudo-Einstein contact form.
	Then there exists a neighborhood $U$ of $M$ in $X$ such that
	its canonical bundle has a flat Hermitian metric.
\end{theorem}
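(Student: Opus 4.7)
The plan is to produce a flat Hermitian metric on $K_X|_U$ by constructing local holomorphic trivializations of $K_X$ near $M$ whose transition functions are locally constant unimodular, starting from closed CR trivializations of $K_M$ supplied by the pseudo-Einstein contact form.

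First, I would invoke Lee's characterization of the pseudo-Einstein condition (available since the CR dimension $n = \dim_{\mathbb{C}} X - 1 \geq 2$): $\theta$ is pseudo-Einstein iff $M$ admits an open cover $\{V_i\}$ with nowhere-vanishing CR sections $\zeta_i$ of the CR canonical bundle $K_M$ satisfying $d\zeta_i = 0$ and $\zeta_i$ volume-normalized with respect to $\theta$. On overlaps the transitions $\zeta_j / \zeta_i$ are then CR functions of modulus one.

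Next, identifying $K_M$ with $K_X|_M$ through the restriction of $(n+1,0)$-forms and applying the Bochner--Hartogs extension theorem (valid since $M$ is strictly pseudoconvex and $n \geq 2$), I would extend each $\zeta_i$ to a holomorphic section $\tilde{\zeta}_i$ of $K_X$ on a one-sided neighborhood $\tilde{V}_i \subset \overline{\Omega}$; these extensions are automatically $d$-closed as holomorphic top-degree forms on the $(n+1)$-complex-dimensional $X$. The transitions $\tilde{\zeta}_j / \tilde{\zeta}_i$ are nonvanishing holomorphic functions on $\tilde{V}_i \cap \tilde{V}_j \cap \Omega$ with unit-modulus boundary values on $M$; applying the maximum modulus principle to them and to their reciprocals shows they are unimodular constants on each connected component. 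Hence $K_X$ carries a flat Hermitian metric on the one-sided collar $W = \bigcup_i \tilde{V}_i \subset \overline{\Omega}$.

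The remaining step, and the one I expect to be the principal obstacle, is to promote this flat structure to a genuine two-sided neighborhood $U$ of $M$ in $X$. My plan is to pass to the Bott--Chern formulation: $K_X|_U$ admits a flat Hermitian metric iff the Chern curvature $\Theta(h_0)$ of some fixed reference Hermitian metric $h_0$ on $K_X$ near $M$ is of the form $i\partial\bar\partial\varphi$ for a smooth function $\varphi$ on $U$. The one-sided construction provides such a potential $\varphi_+$ on $W$. After smoothly extending $\varphi_+$ to $\tilde{\varphi}$ on a two-sided $U$, the defect $\eta = \Theta(h_0) - i\partial\bar\partial\tilde{\varphi}$ is a smooth closed real $(1,1)$-form on $U$ that vanishes on $W$, and one then seeks $\psi$ with $i\partial\bar\partial\psi = \eta$ (after possibly shrinking $U$). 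Solvability of this ambient $\partial\bar\partial$-equation for a right-hand side vanishing on the pseudoconvex side should follow from $\bar\partial$-solvability in weighted spaces on a tubular neighborhood of $M$ combined with the standard $\partial\bar\partial$-lemma trick, and the hypothesis $n \geq 2$ should enter essentially, paralleling its use in Bochner--Hartogs.
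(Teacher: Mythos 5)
Your first two steps are essentially a re-derivation of the known equivalence that the paper quotes as \cref{prop:equivalence-of-pE-forms}: the pseudo-Einstein condition is equivalent to $K_X$ carrying a flat Hermitian metric on the pseudoconvex side near $M$. (Minor quibbles there: the relevant extension result is Lewy's local one-sided extension rather than the global Bochner--Hartogs theorem, and the maximum modulus principle does not apply to a function with unimodular values only on the $M$-portion of the boundary of $\tilde V_i \cap \tilde V_j \cap \Omega$ --- the correct statement is that a CR function of constant modulus on a strictly pseudoconvex hypersurface is locally constant. But this part is repairable and is anyway contained in the cited literature.)

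The genuine gap is in your final step, which is exactly where the theorem's content lies. You reduce to solving $i\partial\bar\partial\psi = \eta$ on a two-sided tubular neighborhood $U$ for a $d$-closed real $(1,1)$-form $\eta$ vanishing on the one-sided collar, and assert this ``should follow from $\bar\partial$-solvability in weighted spaces on a tubular neighborhood of $M$.'' This cannot work as stated: $U$ is an annular domain whose inner boundary component is pseudo\emph{concave} from the point of view of $U$, and $H^{1}(U, \mathcal{O})$ (hence $H^{1,1}_{BC}(U,\mathbb{R}) \cong H^1(U,\mathcal{P})$) is in general nonzero --- for instance it picks up the local cohomology $H^2_A$ supported on the exceptional set $A$ of the Remmert reduction of the filled-in domain. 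So there is no vanishing theorem to invoke; what one must prove instead is that the class of $\eta$ in $H^{1,1}_{BC}(U,\mathbb{R})$ is zero \emph{because its restriction to $U \cap \Omega$ is zero}, i.e.\ that restriction $H^1(U,\mathcal{P}) \to H^1(U\cap\Omega,\mathcal{P})$ is injective. The paper reduces this, via the exact sequence $0 \to \mathbb{R} \to \mathcal{O} \to \mathcal{P} \to 0$ and a diagram chase (\cref{lemma:iff-condition-for-flat-metric}), to the injectivity of $H^1(U,\mathcal{O}) \to H^1(U\cap\Omega,\mathcal{O})$, and establishes the latter as an isomorphism by passing to the Remmert reduction and applying Andreotti--Grauert's Th\'eor\`eme 15 to sublevel/annular sets of a strictly plurisubharmonic exhaustion, together with excision for local cohomology at $A$. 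This is also precisely where the hypothesis $\dim_{\mathbb{C}} X \geq 3$ enters (one needs $q = 1 \leq \dim_{\mathbb{C}} X - 2$), not in a Bochner--Hartogs-type extension as you suggest. Without an argument of this kind your construction stalls at the one-sided collar.
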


The stability for wiggles follows from this theorem
and a necessary and sufficient condition
to the existence of a pseudo-Einstein contact form (\cref{prop:equivalence-of-pE-forms}).

\begin{corollary} \label{cor:stability-of-existence-of-pE-form}
	Let $\Omega$, $X$, $M$, and $U$ be as in \cref{thm:existence-of-flat-Hermitian-metric}.
	Then any strictly pseudoconvex real hypersurface $M'$ in $U$
	admits a pseudo-Einstein contact form.
\end{corollary}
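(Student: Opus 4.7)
My plan is to combine \cref{thm:existence-of-flat-Hermitian-metric} with the characterization \cref{prop:equivalence-of-pE-forms}, which presumably asserts that a closed strictly pseudoconvex real hypersurface $N$ in a complex manifold $V$ admits a pseudo-Einstein contact form if and only if the canonical bundle of some open neighborhood of $N$ in $V$ carries a flat Hermitian metric. Granting this equivalence, the corollary is a short formal consequence.

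First, I would invoke \cref{thm:existence-of-flat-Hermitian-metric} to produce a neighborhood $U$ of $M$ in $X$ together with a flat Hermitian metric $h$ on the canonical bundle $K_U$. Given an arbitrary strictly pseudoconvex real hypersurface $M' \subset U$, I would choose a tubular neighborhood $U' \subset U$ of $M'$ (one can even take $U' = U$, since the proposition only needs some ambient neighborhood). The restricted metric $h|_{K_{U'}}$ has Chern curvature equal to the restriction of the Chern curvature of $h$, which vanishes; hence $K_{U'}$ carries a flat Hermitian metric. Applying the direction of \cref{prop:equivalence-of-pE-forms} that converts a flat Hermitian metric on the ambient canonical bundle into a pseudo-Einstein contact form on the hypersurface, now with input $M' \subset U'$, yields the desired pseudo-Einstein contact form on $M'$.

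The main obstacle in this argument is entirely absorbed by the two inputs. \cref{prop:equivalence-of-pE-forms} carries the CR-geometric work of translating between contact forms and flat Hermitian metrics on the ambient canonical bundle, and \cref{thm:existence-of-flat-Hermitian-metric} carries the analytic work of promoting flatness on $M$ to flatness on a genuine open neighborhood in $X$. Once both are available, the deduction of \cref{cor:stability-of-existence-of-pE-form} requires no further analytic or deformation-theoretic input; in particular one does not need to track how $M'$ sits inside $U$ or how close it is to $M$, which accounts for the clean form of the stability statement.
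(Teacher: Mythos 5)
Your proposal is correct and matches the paper's own (very short) derivation: Theorem \ref{thm:existence-of-flat-Hermitian-metric} supplies a flat Hermitian metric on $K_U$, and the ``flat metric $\Rightarrow$ pseudo-Einstein'' direction of Proposition \ref{prop:equivalence-of-pE-forms}, applied to $M'$ viewed as a hypersurface in the complex manifold $U$, gives the conclusion. The only minor inaccuracy is your paraphrase of Proposition \ref{prop:equivalence-of-pE-forms} (it requires flatness only on the pseudoconvex side near the hypersurface, not on a full neighborhood), but since the metric you use is flat on all of $U$ this does not affect the argument.
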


Note that
this stability may have been already known
when an ambient complex manifold is
a Stein manifold of dimension at least three;
see \cref{rem:pE-form-for-Stein-case}.

Here we give an outline of a proof of \cref{thm:existence-of-flat-Hermitian-metric}.
Take a tubular neighborhood $U$ of $M$ in $X$.
The existence of a pseudo-Einstein contact form on $M$ implies that 
there is a flat Hermitian metric on the canonical bundle of $U \cap \Omega$
if we take $U$ sufficiently small.
By using the Bott-Chern class,
we will show that $K_{U}$ admits a flat Hermitian metric
if the morphism
\begin{equation} \label{eq:morphism-between-cohomology-of-sheaf-of-hol-functions}
	H^{1}(U, \mathcal{O}) \to H^{1}(U \cap \Omega, \mathcal{O})
\end{equation}
induced by the inclusion is injective (\cref{lemma:iff-condition-for-flat-metric}).
On the other hand,
a result of Andreotti and Grauert~\cite{Andreotti-Grauert62}
yields that \cref{eq:morphism-between-cohomology-of-sheaf-of-hol-functions}
is an isomorphism;
here we use the assumption
that the complex dimension of $X$ is at least three.
A proof of this fact will be given in \cref{section:proof-of-theorem}.

Before the end of the introduction,
we remark a relation between our result and the Lee conjecture.
Lee~\cite{Lee88}*{Proposition D} has proved that
the first Chern class $c_{1}(T^{1, 0} M)$ of $T^{1, 0} M$ is equal to zero in $H^{2}(M, \mathbb{R})$
if $M$ admits a pseudo-Einstein contact form,
and conjectured that the converse also holds if $M$ is closed;
this is called the \emph{Lee conjecture}.
There are some affirmative results on this conjecture%
~\cites{Lee88,Dragomir94,Cao-Chang07,Chen-Saotome-Wu12,Chang-Chang-Tie14},
but it is still open.
(Remark that
we need an extra assumption on the pseudo-Hermitian torsion
in \cite{Chang-Chang-Tie14}*{Theorem 1.1},
which has been pointed out in the erratum \cite{Chang-chang-Tie16}.)
The stability of the existence of a pseudo-Einstein contact form
follows from the Lee conjecture since the first Chern class of a CR structure is invariant
under deformations of a CR structure.
In other words,
\cref{cor:stability-of-existence-of-pE-form} can be considered as
one of affirmative results on the Lee conjecture.

\section*{Acknowledgements}
This paper is a part of the author's Ph.D. thesis.
He is grateful to his supervisor Kengo Hirachi
for various valuable suggestions.
He also would like to thank Taiji Marugame for helpful comments on \cref{rem:pE-form-for-Stein-case}.
This work was partially supported by JSPS Research Fellowship for Young Scientists,
JSPS KAKENHI Grant Numbers JP16J04653 and JP19J00063,
and the Program for Leading Graduate Schools, MEXT, Japan.

\section{Preliminaries} \label{section:preliminaries}

We first recall some facts on strictly pseudoconvex domains;
see \cite{several-complex-variables-VII}*{Chapter V} and references therein for details.
In what follows,
the word ``domain'' means a relatively compact connected open set.
Let $\Omega$ be a domain with smooth boundary $M$
in an $(n + 1)$-dimensional complex manifold $X$.
A \emph{defining function} of $\Omega$
is a smooth function on $X$ such that
$\Omega = \rho^{-1}((- \infty, 0))$,
$M = \rho^{- 1}(0)$, and $d \rho \neq 0$ on $M$.
A domain $\Omega$ is said to be \emph{strictly pseudoconvex}
if we can take a defining function of $\Omega$ that is strictly plurisubharmonic near $M$.
It is known that any strictly pseudoconvex domain $\Omega$ is holomorphically convex,
and consequently,
there exist a Stein space $Z$
and a proper surjective holomorphic map $\varphi \colon \Omega \to Z$
having some good properties,
called the \emph{Remmert reduction} of $\Omega$.
In our setting,
$\varphi$ is described as follows.
A compact analytic subset $E$ of positive dimension at every point in $\Omega$
is called a \emph{maximal compact analytic subset} of $\Omega$
if it is maximal among such subsets with respect to inclusion relations;
this $E$ is determined uniquely by $\Omega$.
The map $\varphi$ contracts each connected component of $E$ to a point,
and induces a biholomorphism $\Omega \setminus E \to Z \setminus \varphi(E)$.
In particular, $Z$ has at most finite normal isolated singularities.

We next give a brief introduction to CR manifolds.
Let $M$ be a $(2 n + 1)$-dimensional manifold without boundary.
A \emph{CR structure} is an $n$-dimensional complex subbundle $T^{1, 0} M$
of the complexified tangent bundle $T M \otimes \mathbb{C}$
satisfying the following conditions:
\begin{equation}
	T^{1, 0}M \cap T^{0, 1}M = 0, \qquad
	[\Gamma(T^{1, 0} M), \Gamma(T^{1, 0} M)] \subset \Gamma(T^{1, 0} M),
\end{equation}
where $T^{0, 1} M$ is the complex conjugate of $T^{1, 0} M$ in $T M \otimes \mathbb{C}$.
A typical example of a CR manifold is a real hypersurface $M$ in a complex manifold $X$;
it has the natural CR structure
\begin{equation}
	T^{1, 0} M
	= T^{1, 0} X \cap (T M \otimes \mathbb{C}).
\end{equation}
A CR structure $T^{1, 0} M$ is said to be \emph{strictly pseudoconvex}
if there exists a nowhere-vanishing real one-form $\theta$ on $M$
such that
$\theta$ annihilates $T^{1, 0} M$ and
\begin{equation}
	- \sqrt{- 1} d \theta (Z, \overline{Z}) > 0, \qquad
	0 \neq Z \in T^{1, 0} M;
\end{equation}
we call such a one-form a \emph{contact form}.
Note that the boundary of a strictly pseudoconvex domain
is a strictly pseudoconvex CR manifold with respect to its natural CR structure.

It is known that there is a canonical one-to-one correspondence
between contact forms on $M$ and Hermitian metrics on the canonical bundle of $M$.
A contact form is said to be \emph{pseudo-Einstein}
if the corresponding Hermitian metric is flat;
see \cite{Hirachi-Marugame-Matsumoto17}*{Section 2.3} for details.
Remark that
this definition coincides with that given by Lee~\cite{Lee88}
if $n \geq 2$.
In this paper,
however,
we do not use this definition
but the following necessary and sufficient condition to the existence of a pseudo-Einstein contact form
in terms of a Hermitian metric on the canonical bundle of the ambient complex manifold.

\begin{proposition}[\cite{Hirachi-Marugame-Matsumoto17}*{Proposition 2.6}] \label{prop:equivalence-of-pE-forms}
	Let $M$ be a strictly pseudoconvex real hypersurface in a complex manifold $X$.
	Then $M$ admits a pseudo-Einstein contact form
	if and only if the canonical bundle $K_{X}$ of $X$ has a Hermitian metric
	that is flat on the pseudoconvex side near $M$.
\end{proposition}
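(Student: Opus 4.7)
The plan is to use the canonical identification $K_{X}|_{M} \cong K_{M}$ together with the correspondence between contact forms on $M$ and Hermitian metrics on $K_{M}$ recalled above: under this correspondence, pseudo-Einstein-ness of $\theta$ amounts to the CR-flatness of the induced metric on $K_{M}$, so the goal is to match CR-flatness on $M$ with flatness of an ambient metric on $K_{X}$ restricted to the pseudoconvex side of $M$.

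For the direction $(\Leftarrow)$, assume $\tilde h$ is a Hermitian metric on $K_{X}$ over a neighborhood $U$ of $M$ that is flat on $U \cap \Omega$. In a local holomorphic frame $\sigma$ of $K_{X}$ defined across $M$, write $\tilde h(\sigma,\sigma) = e^{-\Phi}$ with $\Phi$ smooth; flatness says $\partial \bar\partial \Phi = 0$ on $U \cap \Omega$, and by continuity this ambient $(1,1)$-form vanishes at points of $M$ as well. Restricting to $T^{1,0}M \otimes T^{0,1}M \subset T^{1,0}X \otimes T^{0,1}X$ yields the vanishing of $\partial_{b} \bar\partial_{b} \Phi|_{M}$, which is the CR-curvature of the restricted metric $h := \tilde h|_{M}$ on $K_{M}$ in the CR-holomorphic frame $\sigma|_{M}$; hence $h$ is CR-flat and the corresponding contact form is pseudo-Einstein.

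For the direction $(\Rightarrow)$, assume $\theta$ is pseudo-Einstein, so that the induced metric $h$ on $K_{M}$ is CR-flat. Cover $M$ by small open sets $V_{\alpha}$ carrying CR-holomorphic frames $\zeta_{\alpha}$ of $K_{M}$ with $|\zeta_{\alpha}|_{h} \equiv 1$. The transition functions $g_{\alpha\beta} = \zeta_{\beta}/\zeta_{\alpha}$ are CR functions of modulus one; differentiating $g_{\alpha\beta}\,\overline{g_{\alpha\beta}} = 1$ gives $\partial_{b} g_{\alpha\beta} = 0$ in addition to $\bar\partial_{b} g_{\alpha\beta} = 0$, so $d_{b} g_{\alpha\beta} = 0$, and strict pseudoconvexity of $M$ (which makes the contact distribution bracket-generating) forces $g_{\alpha\beta}$ to be locally constant. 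Using a local holomorphic frame of $K_{X}$ near each $V_{\alpha}$ to recast $\zeta_{\alpha}$ as a CR function, the Lewy one-sided extension theorem for strictly pseudoconvex hypersurfaces (which uses the standing assumption $\dim_{\mathbb{C}} X \geq 2$) extends $\zeta_{\alpha}$ to a holomorphic frame $\tilde\zeta_{\alpha}$ of $K_{X}$ on a neighborhood $W_{\alpha}$ of $V_{\alpha}$ inside $\overline{\Omega}$. By uniqueness of the one-sided extension, the holomorphic transitions $\tilde g_{\alpha\beta} = \tilde\zeta_{\beta}/\tilde\zeta_{\alpha}$ are the locally constant holomorphic extensions of $g_{\alpha\beta}$, and in particular $|\tilde g_{\alpha\beta}| \equiv 1$. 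Declaring $|\tilde\zeta_{\alpha}|_{\tilde h} \equiv 1$ therefore produces a well-defined flat Hermitian metric on $K_{X}$ over $\bigcup_{\alpha} W_{\alpha}$, and a partition-of-unity extension across $M$ gives the desired global Hermitian metric on $K_{X}|_{U}$ that is flat on $U \cap \Omega$.

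The main obstacle lies in the $(\Rightarrow)$ direction: one must both produce one-sided holomorphic extensions of CR frames and check that the local flat metrics glue. The essential inputs are the Lewy extension theorem, which accounts for the dimensional hypothesis, together with the elementary but crucial observation that a unit-modulus CR function on a strictly pseudoconvex CR manifold is locally constant. Combined with uniqueness of one-sided CR extension, this reduces what would otherwise be a delicate boundary-value problem for transition functions to the trivial fact that constants extend to constants, making the patching automatic rather than requiring any $\bar\partial$-solvability.
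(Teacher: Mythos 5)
The paper itself offers no proof of this statement: it is imported directly from \cite{Hirachi-Marugame-Matsumoto17}*{Proposition 2.6}, so the only meaningful comparison is with the argument in that source. Your $(\Rightarrow)$ direction follows essentially the same route as that source: local unit-norm closed frames of $K_{M}$, unit-modulus CR transition functions (locally constant because the contact distribution is bracket-generating), one-sided Lewy extension into $\Omega$, uniqueness of the one-sided extension, and patching. Those steps are sound, up to routine care about which components of $W_{\alpha} \cap W_{\beta}$ actually meet $M$ and about extending the resulting one-sided metric to a genuine Hermitian metric on $K_{X}$.

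The genuine gap is that your two directions use two \emph{inequivalent} notions of ``flat'' for a Hermitian metric on $K_{M}$, and the passage between them is exactly the nontrivial content that must be supplied. In $(\Leftarrow)$ you take flatness to mean the vanishing of $\partial_{b}\bar{\partial}_{b}\log h$ restricted to $T^{1,0}M \otimes T^{0,1}M$; in $(\Rightarrow)$ you take it to mean the existence of local unit-norm CR-holomorphic frames. The second does not follow from the first by any general principle (local $\bar{\partial}_{b}$-solvability fails), and for $\dim_{\mathbb{C}} X = 2$ the two notions genuinely differ: the pseudo-Einstein condition on a $3$-dimensional $M$ contains a third-order constraint that the $(1,1)$-Hessian on the contact distribution does not see. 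The notion of flatness actually used in \cite{Hirachi-Marugame-Matsumoto17}*{Section 2.3} --- equivalently, Lee's characterization of pseudo-Einstein contact forms as those volume-normalized with respect to local closed sections of $K_{M}$ --- is the frame-theoretic one, so your $(\Rightarrow)$ may legitimately start from local unit closed frames once you cite that fact, but your $(\Leftarrow)$ is aimed at the wrong target. The repair for $(\Leftarrow)$ is to mirror $(\Rightarrow)$: flatness of $\tilde{h}$ on $U \cap \Omega$ gives, near each boundary point, a holomorphic frame $\sigma$ of $K_{X}$ with $\tilde{h}(\sigma,\sigma) = 1$ on the pseudoconvex side (write $\log \tilde{h}(\sigma,\sigma) = 2\Re g$ with $g$ holomorphic and smooth up to $M$, and replace $\sigma$ by $e^{g}\sigma$); restricting these frames to $M$ produces unit-norm closed sections of $K_{M}$, which is precisely the flatness of the induced metric and hence the pseudo-Einstein condition for the corresponding contact form.
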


This proposition implies that
any strictly pseudoconvex real hypersurface in a complex manifold $X$
admits a pseudo-Einstein contact form
if $K_{X}$ has a flat Hermitian metric.
Thus we can derive \cref{cor:stability-of-existence-of-pE-form}
from \cref{thm:existence-of-flat-Hermitian-metric}.
In the remainder of this paper,
we will prove \cref{thm:existence-of-flat-Hermitian-metric}.

\section{Bott-Chern class and the existence of a flat Hermitian metric} \label{section:Bott-Chern-class}

Let $X$ be a complex manifold.
The \emph{real Bott-Chern cohomology $H^{1, 1}_{BC}(X, \mathbb{R})$ of bi-degree $(1, 1)$}
is defined by
\begin{equation}
	H^{1, 1}_{BC}(X, \mathbb{R})
	= \Set{ \text{$d$-closed real $(1, 1)$-forms on $X$} }
	/ \Set{ \sqrt{-1} \partial \overline{\partial} \psi \mid \psi \in C^{\infty}(X, \mathbb{R})}.
\end{equation}
Let $f \colon Y \to X$ be a holomorphic map between complex manifolds.
Then it defines the natural morphism
$f^{*} \colon H^{1, 1}_{BC}(X, \mathbb{R}) \to H^{1, 1}_{BC}(Y, \mathbb{R})$
induced by the pullback of $(1, 1)$-forms.

For a holomorphic line bundle $L$ over $X$,
the \emph{first Bott-Chern class} $c_{1}^{BC}(L) \in H^{1, 1}_{BC}(X, \mathbb{R})$ is defined as follows.
Take a Hermitian metric $h$ of $L$.
Then the curvature $(\sqrt{-1} / 2 \pi) \Theta_{h} = - (\sqrt{-1} / 2 \pi) \partial \overline{\partial} \log h$
is a $d$-closed real $(1, 1)$-form on $X$,
and defines an element of $H^{1, 1}_{BC}(X, \mathbb{R})$.
This cohomology class is independent of the choice of $h$,
denoted by $c_{1}^{BC}(L)$.
From the definition,
$c_{1}^{BC}(L) = 0$ if and only if $L$ admits a flat Hermitian metric.
Note that $c_{1}^{BC}$ is natural;
that is,
$f^{*} c_{1}^{BC}(L) = c_{1}^{BC}(f^{*} L)$
for any holomorphic map $f \colon Y \to X$.

The cohomology $H^{1, 1}_{BC}(X, \mathbb{R})$ has also a sheaf-theoretic interpretation.
Let $\mathcal{A}^{p, q}$ be the sheaf of smooth $(p, q)$-forms
and $\mathcal{P}$ be that of pluriharmonic functions.
Then there exists the following exact sequence of sheaves~\cite{Bigolin69}*{Teorema (2.1)}:
\begin{equation}
	0 \to
		\mathcal{P} \to
		\mathcal{A}^{0, 0}_{\mathbb{R}} \xrightarrow{\sqrt{-1} \partial \overline{\partial}}
		\mathcal{A}^{1, 1}_{\mathbb{R}} \xrightarrow{d}
		(\mathcal{A}^{2, 1} \oplus \mathcal{A}^{1, 2})_{\mathbb{R}}.
\end{equation}
Here the subscript $\mathbb{R}$ means the subsheaf consisting of real forms.
This exact sequence implies that
$H^{1}(X, \mathcal{P})$ is isomorphic to $H^{1, 1}_{BC}(X, \mathbb{R})$.
Note that a holomorphic map $f \colon Y \to X$ induces a natural morphism
$f^{*} \colon H^{1}(X, \mathcal{P}) \to H^{1}(Y, \mathcal{P})$,
which is compatible with $f^{*} \colon H^{1, 1}_{BC}(X, \mathbb{R}) \to H^{1, 1}_{BC}(Y, \mathbb{R})$
defined above.

This formulation gives a sufficient condition to the existence of a flat Hermitian metric.

\begin{lemma} \label{lemma:iff-condition-for-flat-metric}
	Let $X$ and $Y$ be complex manifolds
	and $f \colon Y \to X$ be a holomorphic map.
	Assume that $f$ induces injective morphisms
	$H^{1}(X, \mathcal{O}) \hookrightarrow H^{1}(Y, \mathcal{O})$
	and $H^{2}(X, \mathbb{R}) \hookrightarrow H^{2}(Y, \mathbb{R})$,
	and a surjective morphism $H^{1}(X, \mathbb{R}) \twoheadrightarrow H^{1}(Y, \mathbb{R})$.
	Then,
	for any holomorphic line bundle $L$ over $X$,
	it admits a flat Hermitian metric
	if so does $f^{*} L$.
\end{lemma}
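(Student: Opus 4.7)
The plan is to reduce the problem to showing that the induced map $f^{*} \colon H^{1}(X, \mathcal{P}) \to H^{1}(Y, \mathcal{P})$ is injective. Via the isomorphism $H^{1}(X, \mathcal{P}) \cong H^{1, 1}_{BC}(X, \mathbb{R})$ recalled above and the naturality of the first Bott-Chern class, the hypothesis that $f^{*} L$ carries a flat Hermitian metric becomes
\begin{equation*}
    f^{*} c_{1}^{BC}(L) = c_{1}^{BC}(f^{*} L) = 0;
\end{equation*}
injectivity of $f^{*}$ on $H^{1}(X, \mathcal{P})$ then forces $c_{1}^{BC}(L) = 0$, producing the desired flat metric on $L$.

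To exhibit this injectivity, I would augment the sheaf-theoretic picture with the short exact sequence of sheaves
\begin{equation*}
    0 \to \mathbb{R} \to \mathcal{O} \xrightarrow{\operatorname{Im}} \mathcal{P} \to 0,
\end{equation*}
which is available because every pluriharmonic function is locally the imaginary part of a holomorphic function, and the kernel of $\operatorname{Im}$ on $\mathcal{O}$ consists of the (locally constant) real holomorphic functions. Taking cohomology and comparing via $f$ yields a commutative ladder
\begin{equation*}
\begin{tikzcd}
H^{1}(X, \mathbb{R}) \arrow[r] \arrow[d, two heads] & H^{1}(X, \mathcal{O}) \arrow[r] \arrow[d, hook] & H^{1}(X, \mathcal{P}) \arrow[r] \arrow[d] & H^{2}(X, \mathbb{R}) \arrow[d, hook] \\
H^{1}(Y, \mathbb{R}) \arrow[r] & H^{1}(Y, \mathcal{O}) \arrow[r] & H^{1}(Y, \mathcal{P}) \arrow[r] & H^{2}(Y, \mathbb{R})
\end{tikzcd}
\end{equation*}
whose outer three vertical arrows are (respectively) surjective, injective, and injective by the hypotheses of the lemma.

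The remaining step is a standard four-lemma diagram chase. Given $\alpha \in H^{1}(X, \mathcal{P})$ with $f^{*} \alpha = 0$, its image in $H^{2}(X, \mathbb{R})$ vanishes by injectivity of the rightmost arrow, so $\alpha$ lifts to some $\beta \in H^{1}(X, \mathcal{O})$; the image $\beta' \in H^{1}(Y, \mathcal{O})$ dies in $H^{1}(Y, \mathcal{P})$ and hence comes from some $\gamma \in H^{1}(Y, \mathbb{R})$; surjectivity lifts $\gamma$ to some $\delta \in H^{1}(X, \mathbb{R})$; the images of $\beta$ and of $\delta$ in $H^{1}(Y, \mathcal{O})$ then agree, so injectivity of $H^{1}(X, \mathcal{O}) \hookrightarrow H^{1}(Y, \mathcal{O})$ exhibits $\beta$ as coming from $H^{1}(X, \mathbb{R})$, whence $\alpha = 0$ by exactness at $H^{1}(X, \mathcal{P})$. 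No individual step is technically demanding; the one genuine choice is to invoke the sheaf sequence $0 \to \mathbb{R} \to \mathcal{O} \to \mathcal{P} \to 0$, after which the three hypotheses of the lemma match precisely the three arrows needed for the four-lemma and the rest is automatic.
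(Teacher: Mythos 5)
Your proposal is correct and follows essentially the same route as the paper: reduce to injectivity of $f^{*}$ on $H^{1}(\cdot, \mathcal{P})$ via the Bott--Chern class, invoke the short exact sequence $0 \to \mathbb{R} \to \mathcal{O} \to \mathcal{P} \to 0$ (your $\operatorname{Im}$ versus the paper's $\Re$ composed with the inclusion $\sqrt{-1}$ is an immaterial normalization), and finish with the four-lemma diagram chase that the paper leaves to the reader. No issues.
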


\begin{proof}
	Assume that $f^{*} L$ has a flat Hermitian metric.
	As we noted above,
	this is equivalent to $f^{*} c_{1}^{BC}(L) = c_{1}^{BC}(f^{*} L) = 0$.
	Hence it is enough to prove the injectivity
	of $f^{*} \colon H^{1}(X, \mathcal{P}) \to H^{1}(Y, \mathcal{P})$.
	Consider the following exact sequence of sheaves:
	\begin{equation}
		0 \to
		\mathbb{R} \xrightarrow{\sqrt{- 1}}
		\mathcal{O} \xrightarrow{\Re}
		\mathcal{P} \to
		0.
	\end{equation}
	This induces the following commutative diagram with exact rows:
	\begin{equation}
		\begin{tikzcd}
			H^{1}(X, \mathbb{R}) \arrow[d] \arrow[r]
				& H^{1}(X, \mathcal{O}) \arrow[d] \arrow[r]
				& H^{1}(X, \mathcal{P}) \arrow[d] \arrow[r]
				& H^{2}(X, \mathbb{R}) \arrow[d] \\
			H^{1}(Y, \mathbb{R}) \arrow[r]
				& H^{1}(Y, \mathcal{O}) \arrow[r]
				& H^{1}(Y, \mathcal{P}) \arrow[r]
				& H^{2}(Y, \mathbb{R}).
		\end{tikzcd}
	\end{equation}
	The injectivity of $H^{1}(X, \mathcal{P}) \to H^{1}(Y, \mathcal{P})$
	follows from an easy diagram chasing.
\end{proof}

\section{Proof of \cref{thm:existence-of-flat-Hermitian-metric}} \label{section:proof-of-theorem}

Let $X$, $\Omega$, and $M$ be as in \cref{thm:existence-of-flat-Hermitian-metric}.
We first reduce the problem on $X$ to that on a Stein space.
Take a defining function $\rho$ of $\Omega$
that is strictly plurisubharmonic near the boundary.
Without loss of generality,
we may assume that $\rho \colon X \to \mathbb{R}$ is proper.
Then,
for sufficiently small $\delta > 0$,
there exists a diffeomorphism
\begin{equation}
	\chi \colon (- 2 \delta, 2 \delta) \times M \to \rho^{- 1}((- 2 \delta, 2 \delta))
\end{equation}
such that $\chi(0, p) = p$ and $\rho(\chi(t, p)) = t$.
Replacing $\delta$ to a smaller one if necessary,
we may assume that $\rho$ is strictly plurisubharmonic on $\rho^{- 1}((- 2 \delta, 2 \delta))$.
In particular,
$\Omega' = \rho^{- 1}((- \infty, \delta))$ is a strictly pseudoconvex domain in $X$ containing $\Omega$.
Consider the Remmert reduction $\varphi \colon \Omega' \to Z$.
From the strict plurisubharmonicity of $\rho$,
it follows that the maximal compact analytic subset of $\Omega'$
cannot intersect with $\rho^{- 1}((- \delta, \delta))$;
in particular,
$\varphi$ is a biholomorphism on $\rho^{- 1}((- \delta, \delta))$.
Without loss of generality,
we may assume that $\rho$ descends to a smooth function $Z \to \mathbb{R}$;
use the same letter $\rho$ for abbreviation.
It is sufficient to show the existence
of a neighborhood $U \subset \rho^{- 1}((- \delta, \delta))$ of $M = \rho^{- 1}(0)$
such that $K_{U}$ has a flat Hermitian metric.
To this end,
we need to construct a ``good'' exhaustion function on $Z$.

\begin{lemma} \label{lem:good-exhaustion}
	
	Fix $0 < \alpha < \delta$.
	There exists a smooth non-negative strictly plurisubharmonic exhaustion function $\phi$ on $Z$
	satisfying the following conditions:
	\begin{itemize}
		\item $\phi^{- 1}(0)$ coincides with the singular set $A$ of $Z$;
		\item $\phi$ is of the form
			\begin{equation}
				\phi(p)
				= \frac{\rho(p)}{\delta (\delta - \rho(p))} + K
			\end{equation}
			on $\rho^{- 1}((- \alpha, \delta))$ for a constant $K > 0$ ;
		\item $\phi < K$ on $\rho^{- 1}(\opcl{- \infty}{- \alpha})$.
	\end{itemize}
	
\end{lemma}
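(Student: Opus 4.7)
The plan is to construct $\phi$ by gluing the prescribed formula $g(\rho)+K$ on a collar of $M$ with a smooth plurisubharmonic function vanishing exactly on $A$, via Richberg's regularized maximum.

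I would begin with two elementary observations. Direct computation gives $g'(t)=1/(\delta-t)^2>0$ and $g''(t)=2/(\delta-t)^3>0$ on $(-\infty,\delta)$, so $g(t)=t/(\delta(\delta-t))$ is smooth, strictly increasing, and strictly convex, with $g(0)=0$, $g(-\alpha)<0$, and $g(t)\to+\infty$ as $t\to\delta^-$. Combined with the identity
\[
    i\,\partial\bar\partial g(\rho)=g'(\rho)\,i\,\partial\bar\partial\rho+g''(\rho)\,i\,\partial\rho\wedge\bar\partial\rho
\]
and strict plurisubharmonicity of $\rho$ on $\rho^{-1}((-2\delta,2\delta))$, this shows $g(\rho)+K$ is smooth and strictly plurisubharmonic on $\rho^{-1}((-2\delta,2\delta))$ for any constant $K$. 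Next, since $\rho\colon X\to\mathbb{R}$ is proper and $\Omega'$ is relatively compact, $\rho$ is bounded below on $\overline{\Omega'}$, say $\rho\ge -N$; this descends via the proper Remmert reduction $\varphi$ to $Z$, so $\rho^{-1}((-\infty,-\alpha])$ is compact in $Z$. The maximum principle for strictly plurisubharmonic functions further places the singular set $A=\varphi(E)\subset\rho^{-1}((-\infty,-2\delta])$.

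Since $Z$ is Stein with finite isolated singular set $A$, I fix a smooth strictly plurisubharmonic function $\sigma\colon Z\to[0,\infty)$ with $\sigma^{-1}(0)=A$, constructed from local squared-distance functions in holomorphic embeddings around each singular point and spliced with a global Stein exhaustion using Richberg's theorem. Set $\rho_{\max}:=\max_A\rho\le-2\delta$ and pick $K\in(\lvert g(-\alpha)\rvert,\,\lvert g(\rho_{\max})\rvert-2\eta)$ for a small $\eta>0$; this interval is nonempty since $g$ is increasing and $\rho_{\max}<-\alpha$. Then $g(-\alpha)+K>0$ (so that $\phi\ge 0$ on the collar) and $g(\rho(a))+K<-\eta$ for each $a\in A$ (so that $\phi(A)=0$ below). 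Choose $\mu>0$ small enough that $\mu\sigma<g(-\alpha)+K-\eta$ on the compact $\rho^{-1}([-N,-\alpha])$, and define
\[
    \phi:=\max\nolimits_\eta\bigl(g(\rho)+K,\ \mu\sigma\bigr)
\]
on $\rho^{-1}((-2\delta,\delta))$ and $\phi:=\mu\sigma$ on $\rho^{-1}((-\infty,-2\delta])$. These definitions agree on the overlap (where $\mu\sigma$ dominates $g(\rho)+K$ by more than $\eta$, by our choice of $K$ and $\mu$) and together produce a globally smooth strictly plurisubharmonic function on $Z$ that equals $g(\rho)+K$ exactly on $\rho^{-1}((-\alpha,\delta))$, is strictly less than $K$ on $\rho^{-1}((-\infty,-\alpha])$, is nonnegative, and vanishes precisely on $A$.

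The main technical obstacle is simultaneously arranging the comparison inequalities between $g(\rho)+K$ and $\mu\sigma$ on the compact transition annulus $\rho^{-1}([-2\delta,-\alpha])$, so that the regularized maximum yields $g(\rho)+K$ exactly on $\rho^{-1}((-\alpha,\delta))$ while switching to $\mu\sigma$ on a neighborhood of $\rho^{-1}((-\infty,-2\delta])$. This balancing is controlled by the freedom in choosing $K$, $\mu$, and $\eta$; compactness of $\rho^{-1}([-N,-\alpha])$ ensures that $\sigma$ is uniformly bounded above and, away from $A$, uniformly bounded below, enabling the required crossings.
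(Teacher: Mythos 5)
Your overall strategy---gluing the prescribed collar formula $g(\rho)+K$ to a global strictly plurisubharmonic function vanishing exactly on $A$---is reasonable, but the gluing step as written has a genuine gap. You take the regularized maximum $\max_{\eta}\bigl(g(\rho)+K,\mu\sigma\bigr)$ on all of $\rho^{-1}((-2\delta,\delta))$, and the second bullet of the lemma requires this to equal $g(\rho)+K$ on all of $\rho^{-1}((-\alpha,\delta))$, i.e.\ that $g(\rho)+K\ge\mu\sigma+\eta$ there. But you only arrange the comparison $\mu\sigma<g(-\alpha)+K-\eta$ on the compact set $\rho^{-1}([-N,-\alpha])$, which is disjoint from $\rho^{-1}((-\alpha,\delta))$. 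The latter set is the noncompact end of $Z$, where your $\sigma$ (built from a proper embedding and a global Stein exhaustion) is unbounded, and nothing controls its growth relative to $g(\rho)=\rho/(\delta(\delta-\rho))$; in general $\mu\sigma$ will overtake $g(\rho)+K$ somewhere in the outer collar, so the exact formula---which is essential in the application, since the level sets of $\phi$ near $M$ must be level sets of $\rho$---is not established. A second, smaller slip: the range $K<\lvert g(\rho_{\max})\rvert-2\eta$ does not guarantee $g(-2\delta)+K<-\eta$ when $\rho_{\max}<-2\delta$ (note $\lvert g(\rho_{\max})\rvert>\lvert g(-2\delta)\rvert$ since $g$ is increasing), and it is this inequality near the interface $\rho^{-1}(\{-2\delta\})$---not the one at the points of $A$, where you set $\phi=\mu\sigma$ outright---that forces the regularized maximum to collapse to $\mu\sigma$ on a one-sided neighborhood of the interface and makes the piecewise definition smooth; you should demand $K<\lvert g(-2\delta)\rvert-2\eta$ instead.

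Both problems are repairable: restrict the regularized maximum to a compact transition annulus such as $\rho^{-1}((-2\delta,-\alpha+\epsilon'))$, define $\phi=g(\rho)+K$ outright on $\rho^{-1}((-\alpha,\delta))$, and check agreement on the two overlaps, where compactness gives you the needed uniform comparisons. The paper sidesteps the issue by a different mechanism: it multiplies the global strictly plurisubharmonic function $\psi$ (with $\psi^{-1}(0)=A$) by a cutoff $\lambda(\rho)$ that vanishes identically on $\rho^{-1}((-\alpha,\delta))$, and then \emph{adds} $\epsilon\lambda(\rho)\psi$ to a convex reparametrization $g_{2}(1/(\delta-\rho))$ of the collar formula; strict plurisubharmonicity on the transition annulus, where $\lambda(\rho)\psi$ need not be plurisubharmonic, is recovered by shrinking $\epsilon$ and using compactness. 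Either route works, but as submitted your argument does not prove the second bullet of the lemma.
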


The proof of this lemma is slightly complicated,
and so will be given later.
Now,
we complete the proof of \cref{thm:existence-of-flat-Hermitian-metric}
using \cref{lemma:iff-condition-for-flat-metric,lem:good-exhaustion}.
Note that our proof is similar in spirit
to the proof of \cite{Yau81}*{Theorem B}.

\begin{proof}[Proof of \cref{thm:existence-of-flat-Hermitian-metric}]
	Set
	\begin{equation}
		\Omega(a, b) = \Set{ K + a < \phi < K + b}
	\end{equation}
	for $- \infty \leq a < b$.
	Note that $\phi^{- 1}(K) = M$ and $\Omega(- K, b) = \Omega(- \infty, b) \setminus A$.
	It is enough to prove that
	the canonical bundle of $\Omega(- \epsilon, \epsilon)$ admits a flat Hermitian metric
	for some $\epsilon > 0$
	if $M$ has a pseudo-Einstein contact form.
	The existence of a pseudo-Einstein contact form on $M$ implies that
	the canonical bundle of $\Omega(- \epsilon, 0)$ has a flat Hermitian metric
	for sufficiently small $\epsilon > 0$  by \cref{prop:equivalence-of-pE-forms}.
	We may also assume,
	by making $\epsilon$ small if necessary,
	the inclusion $\Omega(- \epsilon, 0) \hookrightarrow \Omega(- \epsilon, \epsilon)$
	induces isomorphisms
	\begin{align}
		H^{1}(\Omega(- \epsilon, \epsilon), \mathbb{R})
			& \xrightarrow{\simeq} H^{1}(\Omega(- \epsilon, 0), \mathbb{R}), \\
		H^{2}(\Omega(- \epsilon, \epsilon), \mathbb{R})
			& \xrightarrow{\simeq} H^{2}(\Omega(- \epsilon, 0), \mathbb{R}).
	\end{align}
	According to \cref{lemma:iff-condition-for-flat-metric},
	it suffices to prove that
	\begin{equation}
		H^{1}(\Omega(- \epsilon, \epsilon), \mathcal{O})
			\to H^{1}(\Omega(- \epsilon, 0), \mathcal{O})
	\end{equation}
	is also an isomorphism.
	Consider the following commutative diagram induced by inclusions:
	\begin{equation}
		\begin{tikzcd}
			H^{1}(\Omega(- K, \epsilon), \mathcal{O}) \arrow[d] \arrow[r,]
				& H^{1}(\Omega(- \epsilon, \epsilon), \mathcal{O}) \arrow[d] \\
			H^{1}(\Omega(- K, 0), \mathcal{O}) \arrow[r,]
				& H^{1}(\Omega(- \epsilon, 0), \mathcal{O}) .
		\end{tikzcd}
	\end{equation}
	From~\cite{Andreotti-Grauert62}*{Th\'{e}or\`{e}me 15},
	it follows that each row is an isomorphism;
	here we use the assumption
	that the complex dimension of $X$ is at least three.
	Hence it is sufficient to show the left column is an isomorphism.
	Since $\Omega(- \infty, \epsilon)$ and $\Omega(- \infty, 0)$ are Stein spaces,
	we obtain the following commutative diagram whose rows are isomorphisms:
	\begin{equation}
		\begin{tikzcd}
			 H^{1}(\Omega(- K, \epsilon), \mathcal{O}) \arrow[d] \arrow[r, "\simeq"]
				& H^{2}_{A}(\Omega(- \infty, \epsilon), \mathcal{O}) \arrow[d] \\
			 H^{1}(\Omega(- K, 0), \mathcal{O}) \arrow[r, "\simeq"]
				& H^{2}_{A}(\Omega(- \infty, 0), \mathcal{O}) .
		\end{tikzcd}
	\end{equation}
	On the other hand,
	the right column of the above diagram is also an isomorphism
	by the excision property of the local cohomology.
	This completes the proof.
\end{proof}

What is left is to show \cref{lem:good-exhaustion},
the existence of a ``good'' exhaustion function $\phi$ on $Z$.

\begin{proof}[Proof of \cref{lem:good-exhaustion}]
	
	As noted in \cref{section:preliminaries},
	the singular set $A$ of $Z$ is finite,
	given by $A = \Set{p_{1}, \dots , p_{k}} \subset Z$.
	We first construct a smooth non-negative strictly plurisubharmonic exhaustion function $\psi$
	on $Z$ with $\psi^{- 1}(0) = A$.
	There exists a proper holomorphic regular embedding $f \colon Z \to \mathbb{C}^{N}$
	for sufficiently large $N$~\cite{Narasimhan60}*{Theorem 6};
	in what follows,
	we identify $Z$ with the image of $f$.
	Then
	$\psi_{0} = |z|^{2} + \sum_{j = 1}^{k} \log |z - p_{j}|^{2}$ is a strictly plurisubharmonic
	exhaustion function on $\mathbb{C}^{N}$ with $\psi_{0}^{- 1}(- \infty) = A$.
	Hence $\psi = \exp \psi_{0} = \exp (|z|^{2}) \prod_{j = 1}^{k} |z - p_{j}|^{2}$
	is a smooth non-negative strictly plurisubharmonic exhaustion function on $\mathbb{C}^{N}$
	with $\psi^{- 1}(0) = A$.

	Choose $\beta \in \mathbb{R}$ with $\alpha < \beta < \delta$,
	and take a smooth function $\lambda \colon \mathbb{R} \to [0, 1]$ on $\mathbb{R}$
	such that $\lambda \equiv 1$ on $(- \infty, - \beta)$
	and $\lambda \equiv 0$ on $(- \alpha, \infty)$.
	Then the function
	\begin{equation}
		\phi_{1} (p) = \lambda(\rho(p)) \psi(p)
	\end{equation}
	is strictly plurisubharmonic on $\rho^{- 1}((- \infty, - \beta))$
	and identically zero on $\rho^{- 1}((- \alpha, \delta))$.

	Next,
	take a non-negative smooth function $g_{1}$ on $\mathbb{R}$
	with
	\begin{equation}
		\supp g_{1} \subset ((2 \delta)^{- 1}, (\beta + \delta)^{- 1}),
		\qquad \int_{\mathbb{R}} g_{1} (t) d t = 1,
	\end{equation}
	and set
	\begin{equation}
		g_{2}(t)
		= \int_{0}^{t} \int_{0}^{s} g_{1}(r) d r d s.
	\end{equation}
	This $g_{2}$ is a non-negative and non-decreasing convex smooth function on $\mathbb{R}$,
	vanishes identically on $\opcl{- \infty}{(2 \delta)^{- 1}}$,
	and
	\begin{equation}
		g_{2}(t) = t - \delta^{- 1} + g_{2} \pqty{\delta^{- 1}} > 0
	\end{equation}
	on a neighborhood of $\clop{(\beta + \delta)^{- 1}}{\infty}$.
	The function
	\begin{equation}
		\phi_{2} (p)
		= g_{2} \pqty{\frac{1}{\delta - \rho(p)}}
	\end{equation}
	vanishes identically on $\rho^{- 1}(\opcl{- \infty}{- \delta})$,
	is plurisubharmonic on $\rho^{- 1}((- \delta, \delta))$,
	and
	\begin{equation}
		\phi_{2}(p)
		= \frac{\rho(p)}{\delta (\delta - \rho(p))} + g_{2} \pqty{\delta^{- 1}} > 0.
	\end{equation}
	on a neighborhood of $\rho^{- 1}(\clop{- \beta}{\delta})$.
	Hence,
	for any $\epsilon > 0$,
	the sum $\phi = \epsilon \phi_{1} + \phi_{2}$ is
	a non-negative smooth exhaustion function on $Z$
	such that it is strictly plurisubharmonic on
	$\rho^{- 1}((- \infty, - \beta) \cup (- \alpha, \delta))$,
	and satisfies $\phi^{- 1}(0) = A$.
	Since $\phi_{2}$ is strictly plurisubharmonic on the compact set $\rho^{- 1}([- \beta, - \alpha])$,
	the function $\phi$ is also strictly plurisubharmonic there for sufficiently small $\epsilon$.
	Replacing $\epsilon$ by a smaller one,
	we also have $\phi < g(\delta^{- 1})$ on $\rho^{- 1}(\opcl{- \infty}{- \alpha})$.
\end{proof}

\begin{remark} \label{rem:pE-form-for-Stein-case}
	Cao and Chang have stated that if $M$ is the boundary of a strictly pseudoconvex domain
	in a Stein manifold of complex dimension at least three,
	then $M$ admits a pseudo-Einstein contact form~\cite{Cao-Chang07}*{Main Theorem (2)}.
	However,
	as the author has pointed out in \cite{Takeuchi-Chern}*{Remark 4.3},
	there exists such an $M$
	satisfying $c_{1}(T^{1, 0} M) \neq 0$ in $H^{2}(M, \mathbb{R})$;
	in particular, $M$ has no pseudo-Einstein contact form.
	Here,
	we give a short proof of a corrected statement:
	``if $M$ is the boundary of a strictly pseudoconvex domain
	in a Stein manifold of complex dimension at least three,
	and satisfies $c_{1}(T^{1, 0} M) = 0$ in $H^{2}(M, \mathbb{R})$,
	then $M$ admits a pseudo-Einstein contact form''.
	A discussion in \cite{Lee88}*{Section 6} gives that 
	a closed strictly pseudoconvex CR manifold $(M, T^{1, 0} M)$
	of dimension greater than three
	admits a pseudo-Einstein contact form
	if $c_{1}(T^{1, 0} M) = 0$ in $H^{2}(M, \mathbb{R})$
	and the Kohn-Rossi cohomology $H^{0, 1}(M)$ of bi-degree $(0, 1)$ vanishes.
	On the other hand,
	a result of Yau~\cite{Yau81}*{Theorem B}
	yields that $H^{0, 1}(M) = 0$
	if $M$ is as in the statement.
	Hence $M$ admits a pseudo-Einstein contact form.
\end{remark}

% \bib, bibdiv, biblist are defined by the amsrefs package.
\begin{bibdiv}
\begin{biblist}

\bib{Andreotti-Grauert62}{article}{
      author={Andreotti, Aldo},
      author={Grauert, Hans},
       title={Th\'eor\`eme de finitude pour la cohomologie des espaces
  complexes},
        date={1962},
        ISSN={0037-9484},
     journal={Bull. Soc. Math. France},
      volume={90},
       pages={193\ndash 259},
         url={http://www.numdam.org/item?id=BSMF_1962__90__193_0},
      review={\MR{0150342}},
}

\bib{Bigolin69}{article}{
      author={Bigolin, Bruno},
       title={Gruppi di {A}eppli},
        date={1969},
     journal={Ann. Scuola Norm. Sup. Pisa (3)},
      volume={23},
       pages={259\ndash 287},
      review={\MR{0245836}},
}

\bib{Cao-Chang07}{article}{
      author={Cao, Jianguo},
      author={Chang, Shu-Cheng},
       title={Pseudo-{E}instein and {$Q$}-flat metrics with eigenvalue
  estimates on {CR}-hypersurfaces},
        date={2007},
        ISSN={0022-2518},
     journal={Indiana Univ. Math. J.},
      volume={56},
      number={6},
       pages={2839\ndash 2857},
         url={https://doi.org/10.1512/iumj.2007.56.3111},
      review={\MR{2375704}},
}

\bib{Chang-Chang-Tie14}{article}{
      author={Chang, Der-Chen},
      author={Chang, Shu-Cheng},
      author={Tie, Jingzhi},
       title={Calabi-{Y}au theorem and {H}odge-{L}aplacian heat equation in a
  closed strictly pseudoconvex {CR} manifold},
        date={2014},
        ISSN={0022-040X},
     journal={J. Differential Geom.},
      volume={97},
      number={3},
       pages={395\ndash 425},
         url={http://projecteuclid.org/euclid.jdg/1406033975},
      review={\MR{3263510}},
}

\bib{Chang-chang-Tie16}{article}{
      author={Chang, Der-Chen},
      author={Chang, Shu-Cheng},
      author={Tie, Jingzhi},
       title={Erratum to ``{C}alabi-{Y}au theorem and {H}odge-{L}aplacian heat
  equation in a closed strictly pseudoconvex {CR} {$(2n+1)$}-manifold''},
        date={2016},
        ISSN={0022-040X},
     journal={J. Differential Geom.},
      volume={102},
      number={2},
       pages={351},
         url={http://projecteuclid.org/euclid.jdg/1453910457},
      review={\MR{3454549}},
}

\bib{Chen-Saotome-Wu12}{article}{
      author={Chen, Jui-Tang},
      author={Saotome, Takanari},
      author={Wu, Chin-Tung},
       title={The {CR} almost {S}chur lemma and {L}ee conjecture},
        date={2012},
        ISSN={2156-2261},
     journal={Kyoto J. Math.},
      volume={52},
      number={1},
       pages={89\ndash 98},
         url={https://doi.org/10.1215/21562261-1503763},
      review={\MR{2892768}},
}

\bib{Case-Yang13}{article}{
      author={Case, Jeffrey~S.},
      author={Yang, Paul},
       title={A {P}aneitz-type operator for {CR} pluriharmonic functions},
        date={2013},
        ISSN={2304-7909},
     journal={Bull. Inst. Math. Acad. Sin. (N.S.)},
      volume={8},
      number={3},
       pages={285\ndash 322},
      review={\MR{3135070}},
}

\bib{Dragomir94}{article}{
      author={Dragomir, Sorin},
       title={On a conjecture of {J}. {M}. {L}ee},
        date={1994},
        ISSN={0385-4035},
     journal={Hokkaido Math. J.},
      volume={23},
      number={1},
       pages={35\ndash 49},
         url={https://doi.org/10.14492/hokmj/1381412484},
      review={\MR{1263822}},
}

\bib{several-complex-variables-VII}{book}{
      editor={Grauert, H.},
      editor={Peternell, Th.},
      editor={Remmert, R.},
       title={Several complex variables. {VII}},
      series={Encyclopaedia of Mathematical Sciences},
   publisher={Springer-Verlag, Berlin},
        date={1994},
      volume={74},
        ISBN={3-540-56259-1},
         url={http://dx.doi.org/10.1007/978-3-662-09873-8},
        note={Sheaf-theoretical methods in complex analysis, A reprint of
  {{\i}t Current problems in mathematics. Fundamental directions. Vol. 74}
  (Russian), Vseross. Inst. Nauchn. i Tekhn. Inform. (VINITI), Moscow},
      review={\MR{1326617}},
}

\bib{Hirachi14}{article}{
      author={Hirachi, Kengo},
       title={{$Q$}-prime curvature on {CR} manifolds},
        date={2014},
        ISSN={0926-2245},
     journal={Differential Geom. Appl.},
      volume={33},
      number={suppl.},
       pages={213\ndash 245},
         url={http://dx.doi.org/10.1016/j.difgeo.2013.10.013},
      review={\MR{3159959}},
}

\bib{Hirachi-Marugame-Matsumoto17}{article}{
      author={Hirachi, Kengo},
      author={Marugame, Taiji},
      author={Matsumoto, Yoshihiko},
       title={Variation of total {Q}-prime curvature on {CR} manifolds},
        date={2017},
        ISSN={0001-8708},
     journal={Adv. Math.},
      volume={306},
       pages={1333\ndash 1376},
         url={http://dx.doi.org/10.1016/j.aim.2016.11.005},
      review={\MR{3581332}},
}

\bib{Lee88}{article}{
      author={Lee, John~M.},
       title={Pseudo-{E}instein structures on {CR} manifolds},
        date={1988},
        ISSN={0002-9327},
     journal={Amer. J. Math.},
      volume={110},
      number={1},
       pages={157\ndash 178},
         url={http://dx.doi.org/10.2307/2374543},
      review={\MR{926742}},
}

\bib{Marugame16}{article}{
      author={Marugame, Taiji},
       title={Renormalized {C}hern-{G}auss-{B}onnet formula for complete
  {K}\"ahler-{E}instein metrics},
        date={2016},
        ISSN={0002-9327},
     journal={Amer. J. Math.},
      volume={138},
      number={4},
       pages={1067\ndash 1094},
         url={https://doi.org/10.1353/ajm.2016.0034},
      review={\MR{3538151}},
}

\bib{Narasimhan60}{article}{
      author={Narasimhan, Raghavan},
       title={Imbedding of holomorphically complete complex spaces},
        date={1960},
        ISSN={0002-9327},
     journal={Amer. J. Math.},
      volume={82},
       pages={917\ndash 934},
         url={https://doi.org/10.2307/2372949},
      review={\MR{0148942}},
}

\bib{Takeuchi-Chern}{unpublished}{
      author={Takeuchi, Yuya},
       title={A constraint on {C}hern classes of strictly pseudoconvex {CR}
  manifolds},
        date={2018},
        note={\texttt{ arXiv:1808.02209}},
}

\bib{Yau81}{article}{
      author={Yau, Stephen S.~T.},
       title={Kohn-{R}ossi cohomology and its application to the complex
  {P}lateau problem. {I}},
        date={1981},
        ISSN={0003-486X},
     journal={Ann. of Math. (2)},
      volume={113},
      number={1},
       pages={67\ndash 110},
         url={https://doi.org/10.2307/1971134},
      review={\MR{604043}},
}

\end{biblist}
\end{bibdiv}

\end{document}